\newtheorem{theorem}{Theorem}[section]
\newtheorem{corollary}{Corollary}[section]
\newtheorem{problem}{Problem}[section]
\definecolor{pink}{rgb}{1,0.08,0.58}
\definecolor{orange}{rgb}{1,0.5,0}
\definecolor{purple}{rgb}{0.75002,0,1}
\definecolor{olive}{RGB}{85,107,47}
\definecolor{mygreen}{rgb}{0,0.6,0}
\def\hh{{\mathcal{H}}}
\def\RR{{\mathbb{R}}}
\def\NN{{\mathbb{N}}}
\def\Z{{\mathcal{Z}}}
\def\B{{\mathcal{B}}}
\def\M{{\mathcal{M}}}
\def\N{{\mathcal{N}}}
\def\T{{\mathcal{T}}}
\date{}
\title{Some applications of Caristi's fixed point theorem in metric spaces}
\author{Farshid Khojasteh\footnote{Department of Mathematics, Arak-Branch, Islamic Azad University, Arak, Iran,\;\href{mailto:fr\_khojasteh@yahoo.com}{fr\_khojasteh@yahoo.com}.
                         }
\; , \; Erdal Karapinar\footnote{Department of Mathematics, Atilim University, $\dot{I}$ncek, 06836 Ankara, Turkey,\;\href{mailto:erdalkarapinar@yahoo.com}{erdalkarapinar@yahoo.com}.
}
\; , \; Hassan Khandani\footnote{Department of Mathematics, Mahabad-Branch, Azad University, Mahabad, Iran,\;\href{mailto:khandani.hassan@yahoo.com}{khandani.hassan@yahoo.com}.
}
}
\begin{document}
\maketitle
\date{}
\hrule

\abstract{
In this work, partial answers to Reich, Mizoguchi and Takahashi, and Amini-Harandi's conjectures are presented via a light version of Caristi's fixed point theorem. Moreover, we introduce that many of known fixed point theorem can easily derived from the Caristi's theorem.
Finally, existence of bounded solutions of a functional equation is studied.
}
\newline
\\
{\bf Key words:} Caristi's fixed point theorem, Hausdorff metric, Mizoguchi-Takahashi, Reich's problem, Boyd and Wong's contraction.\\
{\bf  2010 MSC :} {\normalsize } 47H10,
54E05.\\
\hrule

\section{Introduction and preliminaries}
In the literature, the Caristi fixed-point theorem  is known as  one of the very interesting and useful generalization of the Banach fixed point theorem for self-mappings on a complete metric space. In fact,  Caristi fixed-point theorem is a modification of the $\varepsilon$-variational principle of Ekeland
(\cite{E1974, E1979}) that is a crucial tool in the nonlinear analysis, in particular, optimization, variational inequalities, differential equations
and control theory. Furthermore, in 1977 Western  \cite{W1977}  proved that the conclusion of Caristi's theorem is equivalent to metric completeness.
In the last decades, Caristi's fixed-point theorem has been generalized and extended in several directions (see e.g., \cite{1,2}
and the related references therein).

{\par The Caristi's fixed point theorem asserted as follows:
\begin{theorem}\cite{CAR}\label{t1}
Let $(X,d)$ be a complete metric space and let $T:X\to X$ be a mapping such that
\begin{equation}\label{eq1}
d(x,Tx)\leq \varphi(x)-\varphi(Tx)
\end{equation}
for all $x\in X$, where $\varphi:X\to[0,+\infty)$ be a lower semi continuous mapping. Then $T$ has at least a fixed point.
\end{theorem}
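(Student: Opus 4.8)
The plan is to avoid any contraction hypothesis and instead exploit the descent structure encoded in \eqref{eq1} through a Br\o ndsted-type partial order. Define a relation on $X$ by declaring $x\preceq y$ precisely when $d(x,y)\le\varphi(x)-\varphi(y)$. Reflexivity is immediate, transitivity follows by adding two defining inequalities and invoking the triangle inequality, and antisymmetry holds because $x\preceq y$ together with $y\preceq x$ forces $2d(x,y)\le 0$. The crucial observation is that hypothesis \eqref{eq1} says exactly $x\preceq Tx$ for every $x$; hence it suffices to produce a point $x^{*}$ that is maximal for $\preceq$, since then $x^{*}\preceq Tx^{*}$ is incompatible with maximality unless $Tx^{*}=x^{*}$.

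To build such a point constructively I would iterate a near-minimization of $\varphi$ along the order. For $x\in X$ set $S(x)=\{y\in X:\ x\preceq y\}$; this set contains $x$ and, by transitivity, satisfies $S(y)\subseteq S(x)$ whenever $y\in S(x)$. Starting from an arbitrary $x_{0}$, I would choose inductively $x_{n+1}\in S(x_{n})$ with $\varphi(x_{n+1})\le\inf_{y\in S(x_{n})}\varphi(y)+2^{-n}$, which is possible since $\varphi\ge 0$ is bounded below on each $S(x_{n})$. Because $x_{n+1}\in S(x_{n})$ gives $d(x_{n},x_{n+1})\le\varphi(x_{n})-\varphi(x_{n+1})$, the sequence $(\varphi(x_{n}))$ is nonincreasing and bounded below, and summing the telescoping estimate $\sum_{k=n}^{m-1}d(x_{k},x_{k+1})\le\varphi(x_{n})-\varphi(x_{m})$ shows $(x_{n})$ is Cauchy. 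Completeness then yields a limit $x^{*}$.

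The remaining and most delicate step is to show $x^{*}$ is a fixed point, and here both the lower semicontinuity of $\varphi$ and the near-minimizing choice enter. Passing to the limit $m\to\infty$ in $d(x_{n},x_{m})\le\varphi(x_{n})-\varphi(x_{m})$ and using $\varphi(x^{*})\le\liminf_{m}\varphi(x_{m})$ gives $d(x_{n},x^{*})\le\varphi(x_{n})-\varphi(x^{*})$, so $x^{*}\in S(x_{n})$ and therefore $S(x^{*})\subseteq S(x_{n})$ for every $n$. The point of the $2^{-n}$ gap is that any $z\in S(x_{n+1})\subseteq S(x_{n})$ obeys $\varphi(z)\ge\inf_{S(x_{n})}\varphi\ge\varphi(x_{n+1})-2^{-n}$, whence $d(x_{n+1},z)\le\varphi(x_{n+1})-\varphi(z)\le 2^{-n}$; thus the sets $S(x_{n+1})$ have diameters tending to $0$. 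Applying this to $z=x^{*}$ and to $z=Tx^{*}$, legitimate because $Tx^{*}\in S(x^{*})\subseteq S(x_{n+1})$, yields $d(x^{*},Tx^{*})\le d(x^{*},x_{n+1})+d(x_{n+1},Tx^{*})\le 2^{-n+1}\to 0$, forcing $Tx^{*}=x^{*}$.

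I expect the genuine obstacle to be organizing this last diameter-shrinking argument cleanly: one must verify that $x^{*}$ itself lies in every $S(x_{n})$, which is exactly where lower semicontinuity is consumed, before one is entitled to place both $x^{*}$ and $Tx^{*}$ inside the shrinking sets. A purely order-theoretic alternative would replace the explicit sequence by Zorn's lemma, checking that every $\preceq$-chain admits an upper bound; there the same completeness and lower-semicontinuity ingredients reappear precisely in the verification that the chain's limit exists and dominates the chain.
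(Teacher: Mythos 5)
Your proof is correct, but there is nothing in the paper to compare it against: Theorem \ref{t1} is quoted from Caristi's original article \cite{CAR} without proof and is used as a black box (its only role is to yield Corollary \ref{co1}). Measured against the classical literature instead, your argument is the standard Br\o ndsted-order proof of Caristi's theorem, and every step checks out: the relation $x\preceq y \Leftrightarrow d(x,y)\le\varphi(x)-\varphi(y)$ is a partial order; hypothesis \eqref{eq1} says precisely $x\preceq Tx$; the near-minimizing choice $\varphi(x_{n+1})\le\inf_{S(x_n)}\varphi+2^{-n}$ is legitimate because $\varphi\ge 0$; the telescoping bound $d(x_n,x_m)\le\varphi(x_n)-\varphi(x_m)$ together with convergence of the nonincreasing sequence $(\varphi(x_n))$ gives Cauchyness; lower semicontinuity is consumed exactly once, where you pass to the limit in $m$ to get $d(x_n,x^*)\le\varphi(x_n)-\varphi(x^*)$, i.e.\ $x^*\in S(x_n)$ for all $n$; and the gap estimate $d(x_{n+1},z)\le 2^{-n}$ for $z\in S(x_{n+1})\subseteq S(x_n)$ follows from $\varphi(z)\ge\inf_{S(x_n)}\varphi\ge\varphi(x_{n+1})-2^{-n}$, so applying it to $z=x^*$ and to $z=Tx^*$ (valid since $Tx^*\in S(x^*)\subseteq S(x_{n+1})$) forces $d(x^*,Tx^*)=0$. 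Two minor remarks: the maximality framing in your opening paragraph is dispensable, since your construction shows directly that $S(x^*)\subseteq S(x_{n+1})$ has diameter at most $2^{-n+1}$ for every $n$, hence $S(x^*)=\{x^*\}$, which is maximality and the fixed-point property in one stroke; and your Zorn's-lemma alternative does work, but the upper-bound verification for an arbitrary (possibly uncountable) chain requires extracting a countable cofinal subsequence along which $\varphi$ tends to its infimum over the chain, so the explicit $2^{-n}$ iteration you chose is the cleaner and more self-contained route.
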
 }
Let us recall some basic notations, definitions and well-known results
needed in this paper. Throughout this paper, we denote by $\NN$ and $\RR$, the sets of positive integers and real numbers, respectively. Let $(X,d)$
be a metric space. Denote by $\mathcal{CB}(X)$ the family of all nonempty closed and
bounded subsets of $X$. A function $\mathcal{H}:\mathcal{CB}(X)\times
\mathcal{CB}(X)\rightarrow \lbrack 0,\infty )$ defined by
\[
\mathcal{H}(A,B)=\textit{max}\left\{ \sup_{x\in B}d(x,A)\textit{,}\sup_{x\in
A}d(x,B)\right\}
\]
is said to be the Hausdorff metric on $\mathcal{CB}(X)$ induced by the
metric $d$ on $X$. A point $v$ in $X$ is a fixed point of a map $T$ if $v=Tv$
(when $T:X\rightarrow X$ is a single-valued map) or $v\in Tv$ (when $%
T:X\rightarrow \mathcal{CB}(X)$ is a multi-valued map).

Let $(X,d)$ be a complete metric space and $T :X\to X$ a map. Suppose there
exists a function $\phi: [0,+\infty) \to [0,+\infty)$ satisfying $\phi(0) = 0$, $\phi(s) < s$ for $s > 0$ and that
$\phi$ is right upper semi-continuous such that
\[
d(Tx,Ty)\leq\phi(d(x,y)) \ \ \ x,y\in X.
\]
Boyd-Wong \cite{Boyd} showed that $T$ has a unique fixed point.

In 1972, Reich \cite{reich1} introduced the following open problem:
\begin{problem}\label{pr1}
Let $(X,d)$ be a complete metric space and let $T:X\to\mathcal{CB}(X)$ be a multi-valued mapping such that
\begin{equation}
\hh(Tx,Ty)\leq \mu(d(x,y))
\end{equation}
for all $x,y \in X$, where $\mu :\RR^+\rightarrow \RR^+$ is continuous and increasing map such that $\mu(t)<t$, for all $t>0$. Does $T$ have a fixed point?
\end{problem}
Some partial answers to Problem \ref{pr1} given by Daffer et al.(1996) \cite{daffer} and Jachymski(1998) \cite{jachymski}. In these works, the authors
consider additional conditions on the mapping $\mu$ to find a fixed point.
\begin{itemize}
\item[$\bullet$] Daffer et. al
assumed that
$\mu :\RR^+\rightarrow \RR^+$
\begin{itemize}
\item is upper right semi continuous,
\item $\mu(t)<t$ for all $t>0$ and,
\item $\mu(t)\leq t-at^b$, where $a>0$, $1<b<2$ on some interval $[0,s]$, $s>0$.
\end{itemize}
\item[$\bullet$] and Jachymski
assume that
$\mu :\RR^+\rightarrow \RR^+$
\begin{itemize}
\item is supperadditive, i.e., $\mu(x+y)>\mu(x)+\mu(y)$, for all $x,y\in \RR^+$ and,
\item $t\mapsto t-\mu(t)$ is nondecreasing.
\end{itemize}
\end{itemize}
In 1983, Reich \cite{reich2}, introduced another problem as follows:
\begin{problem}\label{pr2}
Let $(X,d)$ be a complete metric space and let $T:X\to \mathcal{CB}(X)$ be a mapping such that
\begin{equation}\label{e4y5}
\mathcal{H}(Tx,Ty)\leq \eta(d(x,y))d(x,y)
\end{equation}
for all $x,y \in X$, where $\eta:(0,+\infty)\to [0,1)$ be a mapping such that $\limsup_{r\to t^+}\eta(r)<1$, for all $r\in(0,+\infty)$. Does $T$ have a fixed point?
\end{problem}
In 1989, Mizoguchi and Takahashi\cite{Miz}, gave a partial answer to Problem \ref{pr2} as follows:
\begin{theorem}
Let $(X,d)$ be a complete metric space and let $T:X\to \mathcal{CB}(X)$ be a mapping such that
\begin{equation}\label{e45}
\mathcal{H}(Tx,Ty)\leq \eta(d(x,y))d(x,y)
\end{equation}
for all $x,y \in X$, where $\eta:(0,+\infty)\to [0,1)$ be a mapping such that $\limsup_{r\to t^+}\eta(r)<1$, for all $r\in[0,+\infty)$. Then $T$ has a fixed point.
\end{theorem}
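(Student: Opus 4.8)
The plan is to derive the statement from Caristi's theorem (Theorem~\ref{t1}), in line with the paper's overall strategy, by producing a single-valued selection of $T$ whose orbits are summable. The first observation I would record is the elementary consequence of the hypothesis: if $y\in Tx$, then the inclusion $y\in Tx$ gives $d(y,Ty)\le \mathcal{H}(Tx,Ty)\le \eta(d(x,y))\,d(x,y)<d(x,y)$, so that moving to any point of $Tx$ strictly shrinks the relevant distance. The goal is then to iterate a selection $fx\in Tx$ in such a way that the telescoping needed for Caristi's inequality~\eqref{eq1} is available.

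Since the infimum defining $d(x,Tx)$ need not be attained, I would build the orbit with a multiplicative slack chosen adaptively. Starting from any $x_0$ and writing $t_n:=d(x_n,x_{n+1})$, I may assume every $t_n>0$ (if some $t_n=0$ then $x_n\in Tx_n$ and we are done). Having produced $x_n,x_{n+1}$ with $x_{n+1}\in Tx_n$, I would pick $x_{n+2}\in Tx_{n+1}$ with $d(x_{n+1},x_{n+2})\le \lambda_n\, d(x_{n+1},Tx_{n+1})$, where $\lambda_n:=\tfrac{1+\eta(t_n)}{2\eta(t_n)}\in(1,1/\eta(t_n))$; this is legitimate because $\eta(t_n)\in(0,1)$. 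Combining with the elementary inequality above (which yields $d(x_{n+1},Tx_{n+1})\le \eta(t_n)\,t_n$) gives $t_{n+1}\le \lambda_n\eta(t_n)\,t_n=\tfrac{1+\eta(t_n)}{2}\,t_n<t_n$, so $\{t_n\}$ is strictly decreasing and converges to some $t_0\ge 0$.

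The heart of the argument, and the step I expect to be the main obstacle, is upgrading this pointwise decrease into genuine summability, and this is exactly where the hypothesis $\limsup_{r\to t^+}\eta(r)<1$ is indispensable. The difficulty is that $\eta$ need not be bounded away from $1$ on all of $(0,\infty)$, so one cannot run a Nadler-type Caristi reduction with a single global contraction constant. Instead I would apply the limsup condition at the limit $t_0$: there exist $q<1$ and $\delta>0$ with $\eta(r)\le q$ for all $r\in(t_0,t_0+\delta)$ (and the interval $(0,\delta)$ when $t_0=0$). Since $t_n\downarrow t_0$, for all large $n$ one has $t_n$ in that interval, whence $t_{n+1}\le \tfrac{1+q}{2}\,t_n$ with $\tfrac{1+q}{2}<1$; the tail therefore decays geometrically and $\sum_n t_n<\infty$, so $\{x_n\}$ is Cauchy.

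With summability in hand, Caristi's theorem can be invoked in the spirit advertised by the paper: fixing a selection $f$ as above, the lower semicontinuous functional $\varphi(x)=\sum_{n\ge 0} d(f^{n}x,f^{n+1}x)$ is finite (every orbit being summable by the previous step) and telescopes to give $d(x,fx)=\varphi(x)-\varphi(fx)$, i.e.\ precisely~\eqref{eq1} for $f$; Theorem~\ref{t1} then yields $x^\ast$ with $fx^\ast=x^\ast\in Tx^\ast$. Alternatively, and more robustly, I would finish directly: by completeness $x_n\to x^\ast$, and from $d(x^\ast,Tx^\ast)\le d(x^\ast,x_{n+1})+\mathcal{H}(Tx_n,Tx^\ast)\le d(x^\ast,x_{n+1})+d(x_n,x^\ast)\to 0$ together with the closedness of $Tx^\ast\in\mathcal{CB}(X)$ one obtains $x^\ast\in Tx^\ast$. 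I expect the remaining technical friction to lie in the bookkeeping of the adaptive slack $\lambda_n$ (matching the arguments of $\eta$ so that a \emph{single} selection $f$ still yields monotone, summable orbits) and in verifying that the series defining $\varphi$ is finite and lower semicontinuous; the direct limiting argument sidesteps the latter and is the safer route to full rigor.
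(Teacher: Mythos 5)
Your argument is correct in substance, but note first that it is not a reconstruction of anything in the paper: the paper states this theorem as Mizoguchi--Takahashi's known result \cite{Miz} and gives no proof of it. The closest the paper comes is Corollary~\ref{co22}, which assumes in addition that $\eta$ is lower semicontinuous and \emph{non-decreasing}, and which is funneled through the Caristi-type machinery (Theorem~\ref{tt1}, itself resting on Theorem~\ref{ttt4}) via the potential $\Phi(x,y)=\frac{d(x,y)}{1-\theta(d(x,y))/d(x,y)}$; that telescoping genuinely needs monotonicity of $\eta$, which you do not assume. What you give instead is essentially Suzuki's direct proof of the full Mizoguchi--Takahashi theorem: select the orbit with multiplicative slack so that $t_{n+1}\le\tfrac{1+\eta(t_n)}{2}\,t_n$, note that $t_n$ decreases strictly to some limit $t^*$ (so $t_n>t^*$ for every $n$), apply the hypothesis $\limsup_{r\to (t^*)^+}\eta(r)<1$ to obtain $q<1$ and $\delta>0$ with $\eta\le q$ on $(t^*,t^*+\delta)$, deduce a geometric tail and hence Cauchyness, and finish with closedness of $Tx^*$ --- all of which is sound, and which proves the stronger statement the paper only quotes. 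What each approach buys: the paper's Caristi route is a uniform template covering its Banach-, Boyd--Wong- and Reich-type corollaries, but it cannot reach the bare limsup condition; your direct route handles the general Mizoguchi--Takahashi function but, as you yourself concede, does not factor through Theorem~\ref{t1} --- the functional $\varphi(x)=\sum_{n}d(f^{n}x,f^{n+1}x)$ is not obviously lower semicontinuous, and your selection is orbit-dependent (each $\lambda_n$ depends on $t_n$, i.e.\ on the previous point), so no single map $f$ is even defined; that paragraph should simply be dropped, since the limiting argument already completes the proof. Two small repairs are needed: $\lambda_n=\tfrac{1+\eta(t_n)}{2\eta(t_n)}$ is undefined when $\eta(t_n)=0$ (either observe that then $d(x_{n+1},Tx_{n+1})=0$ and closedness of $Tx_{n+1}$ already yields a fixed point, or select $x_{n+2}$ directly with $d(x_{n+1},x_{n+2})\le\tfrac{1+\eta(t_n)}{2}\,t_n$, which avoids any division), and you should not use the symbol $t_0$ both for $d(x_0,x_1)$ and for the limit of the sequence $\{t_n\}$.
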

Another analogous open problem, raised in 2010 by Amini-Harandi\cite{Amini} which we assert it after the following notations:

In what follows, $\gamma:[0,+\infty)\to [0,+\infty)$ be subadditive, i.e.$\gamma(x+y)\leq \gamma(x)+\gamma(y)$, for each $x,y\in[0,+\infty)$, a nondecreasing continuous
map such that $\gamma^{-1}(\{0\})=\{0\}$, and let $\Gamma$ consist of all such functions. Also, let $\mathcal{A}$ be the class of all maps $\theta:[0,+\infty)\to [0,+\infty)$ for which there exists an $\epsilon_0 > 0$ such that
\[
\theta(t)\leq \epsilon_0 \ \ \Rightarrow \ \ \theta(t)\geq\gamma(t)
\]
where $\gamma\in\Gamma$.
\begin{problem}\label{pr3}
Assume that $T:X\to \mathcal{CB}(X)$ is a weakly contractive set-valued map on a complete metric space $(X,d)$, i.e.,
\[
\hh(Tx,Ty)\leq d(x,y)-\theta(d(x,y))
\]
for all $x,y\in X$, where $\theta\in\mathcal{A}$. Does $T$ have a fixed point?
\end{problem}
The answer is yes if $Tx$ is compact for every $x$ (Amini-Harandi \cite[Theorem 3.3]{Amini}).\\~\\
In this work, we show that many of known Banach contraction's generalization can be deduce and generalize by Caristi's fixed point theorem and its consequences. Also, partial answers to mentioned open problems are given via our main results. For more details about fixed point generalization of multi-valued mappings we refer to \cite{Tok}.
\section{Main Result}
In this section, we show that many of known fixed point results can be deduces from the following light version of Caristi's theorem:

\begin{corollary}\label{co1}
Let $(X,d)$ be a complete metric space, and let $T:X\to X$ be a mapping such that
\begin{equation}
d(x,y)\leq \varphi(x,y)-\varphi(Tx,Ty),
\end{equation}
for all $x,y\in X$, where $\varphi:X\times X\to [0,\infty)$ is a lower semi continuous with respect to first variable. Then $T$ has a unique fixed point.
\end{corollary}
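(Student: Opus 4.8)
The plan is to deduce Corollary~\ref{co1} directly from Caristi's Theorem~\ref{t1} by manufacturing a one-variable potential out of the two-variable function $\varphi$, and to dispatch uniqueness by a separate, elementary substitution. The guiding observation is that the hypothesis is an inequality in two free variables, so I am free to specialise the second variable in whatever way turns $\varphi$ into a Lyapunov function for the single orbit $x, Tx, T^2x,\dots$.

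For existence, I would set $\psi(x) := \varphi(x, Tx)$ and put $y = Tx$ in the defining inequality. This gives
\[
d(x, Tx) \le \varphi(x, Tx) - \varphi(Tx, T(Tx)) = \psi(x) - \psi(Tx)
\]
for every $x \in X$, which is precisely the Caristi dissipativity condition \eqref{eq1} for $T$ with potential $\psi$; note $\psi \ge 0$ since $\varphi \ge 0$. Provided $\psi$ is lower semicontinuous, Theorem~\ref{t1} then yields a point $v$ with $Tv = v$, and existence is done. Uniqueness I would treat independently, and it costs nothing: if $p$ and $q$ are both fixed, substituting $x = p$, $y = q$ gives $d(p,q) \le \varphi(p,q) - \varphi(Tp, Tq) = \varphi(p,q) - \varphi(p,q) = 0$, so $p = q$. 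This step uses neither completeness nor semicontinuity.

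The main obstacle is the semicontinuity bookkeeping in the existence step: Theorem~\ref{t1} requires $\psi = \varphi(\cdot, T\cdot)$ to be lower semicontinuous, whereas the hypothesis only grants lower semicontinuity of $\varphi$ in its first slot. Since $T$ is not assumed continuous, $x \mapsto \varphi(x, Tx)$ need not inherit lower semicontinuity, so I expect the real work to be here. If instead one argues directly — iterating $x_{n+1} = Tx_n$ and using the telescoping bound $\sum_n d(x_n, x_{n+1}) \le \varphi(x_0, x_1)$ to produce a Cauchy sequence with limit $x^*$ — the same difficulty resurfaces when proving $Tx^* = x^*$. Applying the inequality with $x = x_n$, $y = x^*$ gives $\varphi(x_{n+1}, Tx^*) \le \varphi(x_n, x^*) - d(x_n, x^*)$, and first-variable lower semicontinuity controls the subtracted term from below, yielding $\varphi(x^*, Tx^*) \le \liminf_n \varphi(x_n, x^*)$; but closing the argument then needs an \emph{upper} bound on $\varphi(x_n, x^*)$, which lower semicontinuity does not supply. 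Resolving this — presumably by exploiting the monotone decrease of $\varphi$ along the orbit (so that the orbit values $\varphi(x_n, x_{n+1})$ stay bounded above by $\varphi(x_0,x_1)$) together with first-variable lower semicontinuity applied to the right term — is the crux on which I would concentrate.
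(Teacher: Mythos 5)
Your proposal is, step for step, the paper's own proof: the paper also sets $\psi(x)=\varphi(x,Tx)$, specializes the hypothesis to $y=Tx$ to obtain the Caristi inequality $d(x,Tx)\le\psi(x)-\psi(Tx)$, invokes Theorem~\ref{t1}, and proves uniqueness by exactly your substitution of the two fixed points. The one difference is that the paper simply asserts that ``$\psi$ is a lower semi continuous mapping'' and moves on, whereas you correctly identify this as the crux: lower semicontinuity of $\varphi$ in its first slot does not pass to $x\mapsto\varphi(x,Tx)$ when the second argument moves with $x$ and $T$ is not assumed continuous. So what you have written is the paper's argument together with an honest admission of the gap that the paper glosses over; as a proof it is incomplete at precisely the point where the paper's proof is unjustified.

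You should also know that this gap cannot be closed: Corollary~\ref{co1} as stated is false, so neither your fallback orbit-iteration argument nor any other argument will finish it without strengthening the hypotheses. Take $X=\{0\}\cup\{2^{-n}:n\ge 0\}$ with the usual metric (complete), and $T2^{-n}=2^{-n-1}$, $T0=1$, so that $T$ has no fixed point. Put $e(2^{-n})=n$ and $e(0)=-1$, and define
\[
\varphi(x,y)=\sum_{n\ge 0}d(T^nx,T^ny)+
\begin{cases}
2, & e(y)\le e(x),\\
0, & e(y)> e(x).
\end{cases}
\]
Every one of these series is a finite geometric-type sum (for instance $\varphi(2^{-j},2^{-k})$ has series part $2\,|2^{-j}-2^{-k}|$, and $\varphi(0,2^{-k})$ has series part $2$), and shifting the summation index gives $\sum_{n\ge0}d(T^{n+1}x,T^{n+1}y)=\sum_{n\ge0}d(T^nx,T^ny)-d(x,y)$. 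Since $e(Tz)=e(z)+1$ for every $z\in X$ (including $z=0$), the indicator term is unchanged when $(x,y)$ is replaced by $(Tx,Ty)$, so $d(x,y)=\varphi(x,y)-\varphi(Tx,Ty)$ holds for all $x,y\in X$. For lower semicontinuity in the first variable only the accumulation point $x=0$ needs checking: a direct computation gives $\varphi(0,y)=2$ for every $y$, while $\varphi(2^{-j},y)\ge 2$ for all $j\ge e(y)$ because the indicator then contributes $2$; hence $\liminf_{x\to 0}\varphi(x,y)\ge\varphi(0,y)$. Thus all hypotheses of the corollary hold, yet $T$ is fixed point free. Concretely, $\psi(2^{-j})=\varphi(2^{-j},2^{-j-1})=2^{-j}\to 0$ while $\psi(0)=\varphi(0,1)=2$, which is exactly the failure of lower semicontinuity of $\psi$ that you predicted, and it also kills your alternative route (the orbit from $x_0=1$ converges to $0$, which is not fixed). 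The statement becomes true only under an added hypothesis, e.g.\ that $x\mapsto\varphi(x,Tx)$ itself is lower semicontinuous (which is what the paper implicitly uses), or that $T$ is continuous and $\varphi$ is jointly lower semicontinuous.
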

\begin{proof}
For each $x\in X$, let $y=Tx$ and $\psi(x)=\varphi(x,Tx)$. Then for each $x\in X$
\[
d(x,Tx)\leq \psi(x)-\psi(Tx)
\]
and $\psi$ is a lower semi continuous mapping. Thus, applying Theorem \ref{t1} conclude desired result. To see the uniqueness of fixed point suppose that
$u,v$ be two distinct fixed point for $T$. Then
\[
d(u,v)\leq \varphi(u,v)-\varphi(Tu,Tv)=\varphi(u,v)-\varphi(u,v)=0.
\]
Thus, $u=v$.
\end{proof}
\begin{corollary}\cite[Banach contraction principle]{BAN}
Let $(X,d)$ be a complete metric space and let $T:X\to X$ be a mapping such that for some $\alpha\in[0,1)$
\begin{equation}\label{e6}
d(Tx,Ty)\leq \alpha d(x,y)
\end{equation}
for all $x,y\in X$. Then $T$ has a unique fixed point.
\end{corollary}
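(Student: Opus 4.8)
The plan is to deduce the Banach contraction principle directly from the light version of Caristi's theorem established in Corollary \ref{co1}, rather than running the classical Picard iteration argument. The whole task reduces to producing a single nonnegative function $\varphi$ on $X\times X$, lower semicontinuous in its first variable, that converts the contraction bound \eqref{e6} into the hypothesis of Corollary \ref{co1}.

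First I would define $\varphi:X\times X\to[0,\infty)$ by $\varphi(x,y)=\frac{1}{1-\alpha}\,d(x,y)$. Since $\alpha\in[0,1)$, the scaling constant $\frac{1}{1-\alpha}$ is a well-defined positive real, so $\varphi$ indeed takes values in $[0,\infty)$. For each fixed $y$, the map $x\mapsto d(x,y)$ is continuous, hence $\varphi$ is continuous---and a fortiori lower semicontinuous---with respect to the first variable, which is exactly the regularity that Corollary \ref{co1} requires.

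Next I would verify the Caristi-type displacement inequality. Feeding the contraction estimate \eqref{e6} into the definition of $\varphi$ gives
\[
\varphi(x,y)-\varphi(Tx,Ty)=\frac{1}{1-\alpha}\big(d(x,y)-d(Tx,Ty)\big)\geq\frac{1}{1-\alpha}\big(d(x,y)-\alpha\,d(x,y)\big)=d(x,y)
\]
for all $x,y\in X$. Thus $d(x,y)\leq\varphi(x,y)-\varphi(Tx,Ty)$, and all the hypotheses of Corollary \ref{co1} are met, so $T$ has a unique fixed point.

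There is essentially no hard step here: the argument is a one-line substitution once the correct normalizing factor $\frac{1}{1-\alpha}$ is guessed. The only point that deserves care is choosing this constant so that the gap $(1-\alpha)d(x,y)$ rescales \emph{exactly} to $d(x,y)$; any smaller constant would fail to dominate $d(x,y)$, while the lower semicontinuity requirement is handed to us for free by continuity of the metric. This illustrates the paper's guiding theme that standard contraction results are instances of the Caristi-type principle.
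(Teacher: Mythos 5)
Your proposal is correct and follows exactly the paper's own argument: define $\varphi(x,y)=\frac{d(x,y)}{1-\alpha}$, rearrange the contraction inequality \eqref{e6} into the Caristi-type form $d(x,y)\leq\varphi(x,y)-\varphi(Tx,Ty)$, and invoke Corollary \ref{co1}. You additionally spell out the lower semicontinuity check, which the paper leaves implicit, but the route is the same.
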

\begin{proof}
Define, $\varphi(x,y)=\frac{d(x,y)}{1-\alpha}$. Then (\ref{e6}) shows that
\begin{equation}
(1-\alpha)d(x,y)\leq d(x,y)-d(Tx,Ty).
\end{equation}
It means that
\begin{equation}
d(x,y)\leq \frac{d(x,y)}{1-\alpha}-\frac{d(Tx,Ty)}{1-\alpha}
\end{equation}
and so
\begin{equation}
d(x,y)\leq \varphi(x,y)-\varphi(Tx,Ty)
\end{equation}
and so by applying Corollary \ref{co1}, one can conclude that $T$ has a unique fixed point.
\end{proof}
\begin{corollary}
Let $(X,d)$ be a complete metric space and let $T:X\to X$ be a mapping such that
\begin{equation}\label{e8}
d(Tx,Ty)\leq \eta(d(x,y))
\end{equation}
where $\eta:[0,+\infty)\to[0,\infty)$ be a lower semi continuous mapping such that $\eta(t)<t$, for each $t>0$ and $\frac{\eta(t)}{t}$ be a non-decreasing map. Then $T$ has a unique fixed point.
\end{corollary}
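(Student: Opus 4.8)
The plan is to produce a suitable two–variable Lyapunov function and feed it into Corollary \ref{co1}. For a fixed pair $x,y\in X$ write $a_n=d(T^nx,T^ny)$ and set
\[
\varphi(x,y)=\sum_{n=0}^{\infty}d(T^nx,T^ny).
\]
If I can show that this series converges for every $(x,y)$ and that $\varphi$ is lower semicontinuous in its first argument, then I am essentially done: the series telescopes, so $\varphi(x,y)-\varphi(Tx,Ty)=d(x,y)$, which in particular gives $d(x,y)\le\varphi(x,y)-\varphi(Tx,Ty)$, and Corollary \ref{co1} then yields the unique fixed point with no further work, uniqueness included.

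The real obstacle is the convergence of $\sum_n a_n$, and this is exactly where the hypothesis on $\eta(t)/t$ is needed. From $d(Tx,Ty)\le\eta(d(x,y))$ and $\eta(t)<t$ the sequence $(a_n)$ is non-increasing, so $a_n\le a_0$ for all $n$. Assuming $a_0>0$ and writing $c=\eta(a_0)/a_0<1$, the fact that $t\mapsto\eta(t)/t$ is non-decreasing gives $\eta(a_n)/a_n\le\eta(a_0)/a_0=c$, whence $a_{n+1}\le\eta(a_n)\le c\,a_n$ and therefore $a_n\le c^{\,n}a_0$. Summing the geometric majorant yields $\varphi(x,y)\le a_0/(1-c)<\infty$ (and $\varphi(x,y)=0$ trivially when $a_0=0$). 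Along the way I would record the easy preliminary that lower semicontinuity of $\eta$ together with $0\le\eta(t)<t$ forces $\eta(0)=0$ and $\eta(t)\to0$ as $t\to0^+$; this is what lets the comparison start cleanly and, as noted below, what gives continuity of $T$.

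For lower semicontinuity I would first observe that $T$ is continuous: if $x_k\to x$ then $d(Tx_k,Tx)\le\eta(d(x_k,x))\to0$, so each iterate $T^n$ is continuous and hence each map $x\mapsto d(T^nx,T^ny)$ is continuous. The partial sums $S_N(x)=\sum_{n=0}^{N}d(T^nx,T^ny)$ are therefore continuous and increase pointwise to $\varphi(\cdot,y)$; since the increasing pointwise limit (equivalently, the supremum) of continuous functions is lower semicontinuous, $\varphi$ is lower semicontinuous in its first variable. With convergence and lower semicontinuity in hand, Corollary \ref{co1} applies verbatim. The one point to keep in mind is that all estimates are carried out for a fixed pair $(x,y)$, so the decay constant $c$ depends on $(x,y)$; this is harmless, because Corollary \ref{co1} only requires finiteness and lower semicontinuity of $\varphi$, not a uniform contraction rate.
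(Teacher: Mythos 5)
Your proof is correct, but it takes a genuinely different route from the paper's. The paper also reduces the statement to Corollary \ref{co1}, but with a closed-form, one-step Lyapunov function: it sets $\varphi(x,y)=\frac{d(x,y)}{1-\eta(d(x,y))/d(x,y)}$ for $x\neq y$ (and $0$ on the diagonal), rewrites the hypothesis (\ref{e8}) as $d(x,y)\le \frac{d(x,y)}{1-\eta(d(x,y))/d(x,y)}-\frac{d(Tx,Ty)}{1-\eta(d(x,y))/d(x,y)}$, and then uses the monotonicity of $\eta(t)/t$ together with $d(Tx,Ty)<d(x,y)$ to replace the argument of the ratio in the subtracted term, yielding $d(x,y)\le\varphi(x,y)-\varphi(Tx,Ty)$ in two lines. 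Your $\varphi(x,y)=\sum_{n\ge 0}d(T^nx,T^ny)$ instead uses the monotonicity of $\eta(t)/t$ only to extract a pairwise geometric rate $c=\eta(a_0)/a_0<1$ guaranteeing convergence of the series, and then obtains the Caristi inequality as an exact telescoping identity rather than an estimate. What the paper's choice buys is brevity; what it quietly omits is the verification that its $\varphi$ is lower semicontinuous in the first variable --- this does hold, since $\eta$ l.s.c. makes $t\mapsto t-\eta(t)$ u.s.c. and positive on $(0,\infty)$, hence $t\mapsto t^2/(t-\eta(t))$ is l.s.c. and composition with the continuous map $x\mapsto d(x,y)$ preserves this, but it is not a free step. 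Your construction makes lower semicontinuity transparent (an increasing limit of continuous partial sums, using that $T$ is continuous because $d(Tx,Ty)\le\eta(d(x,y))\le d(x,y)$), at the cost of having to prove convergence; your handling of the degenerate cases ($\eta(0)=0$ by lower semicontinuity, and orbits that collide so that some $a_n=0$) is also sound.
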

\begin{proof}
Define, $\varphi(x,y)=\frac{d(x,y)}{1-\frac{\eta(d(x,y))}{d(x,y)}}$, if $x\neq y$ and otherwise $\varphi(x,x)=0$. Then (\ref{e8}) shows that
\begin{equation}
(1-\frac{\eta(d(x,y))}{d(x,y)})d(x,y)\leq d(x,y)-d(Tx,Ty).
\end{equation}
It means that
\begin{equation}
d(x,y)\leq \frac{d(x,y)}{1-\frac{\eta(d(x,y))}{d(x,y)}}-\frac{d(Tx,Ty)}{1-\frac{\eta(d(x,y))}{d(x,y)}}.
\end{equation}
Since $\frac{\eta(t)}{t}$ is non-decreasing and $d(Tx,Ty) < d(x,y)$ thus
\begin{equation}
d(x,y)\leq \frac{d(x,y)}{1-\frac{\eta(d(x,y))}{d(x,y)}}-\frac{d(Tx,Ty)}{1-\frac{\eta(d(Tx,Ty))}{d(Tx,Ty)}}=\varphi(x,y)-\varphi(Tx,Ty)
\end{equation}
and so by applying Corollary \ref{co1}, one can conclude that $T$ has a unique fixed point.
\end{proof}

The following results are the main result of this paper and play the crucial role to find the partial answers for Problem \ref{pr1}, Problem \ref{pr2} and Problem \ref{pr3}. Comparing the partial answers for Reich's problems, our answers included simple conditions. Also, the compactness condition on $Tx$ is not needed.
\begin{theorem}\label{ttt4}
Let $(X,d)$ be a complete metric space, and let $T:X\to \mathcal{CB}(X)$ be a non-expansive mapping such that
for each $x\in X$ and for all $y\in Tx$, there exists $z\in Ty$ such that
\begin{equation}\label{fdre}
d(x,y)\leq \varphi(x,y)-\varphi(y,z),
\end{equation}
where $\varphi:X\times X\to [0,\infty)$ is a lower semi continuous with respect to first variable. Then $T$ has a  fixed point.
\end{theorem}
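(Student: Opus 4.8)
The plan is to imitate the telescoping argument behind Corollary~\ref{co1}, but to carry it out along a carefully chosen orbit of the multi-valued map $T$ rather than along the iterates of a single-valued map. First I would fix an arbitrary $x_0\in X$ and pick any $x_1\in Tx_0$. The hypothesis, applied with $x=x_0$ and $y=x_1\in Tx_0$, then produces a point $x_2\in Tx_1$ with $d(x_0,x_1)\leq \varphi(x_0,x_1)-\varphi(x_1,x_2)$. Repeating this selection inductively yields a sequence $\{x_n\}$ with $x_{n+1}\in Tx_n$ and
\[
d(x_n,x_{n+1})\leq \varphi(x_n,x_{n+1})-\varphi(x_{n+1},x_{n+2})\qquad(n\geq 0).
\]

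Next I would set $a_n=\varphi(x_n,x_{n+1})\geq 0$, so that the displayed inequality reads $d(x_n,x_{n+1})\leq a_n-a_{n+1}$. In particular $\{a_n\}$ is non-increasing and bounded below, hence convergent, and summing the inequality telescopes to $\sum_{n=0}^{N-1}d(x_n,x_{n+1})\leq a_0-a_N\leq a_0<\infty$. Thus the series $\sum_n d(x_n,x_{n+1})$ converges, which forces $\{x_n\}$ to be Cauchy; by completeness it converges to some $x^\ast\in X$.

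It remains to show $x^\ast\in Tx^\ast$, and this is where I expect the real work to lie. Since $x_{n+1}\in Tx_n$ and $T$ is non-expansive,
\[
d(x_{n+1},Tx^\ast)\leq \hh(Tx_n,Tx^\ast)\leq d(x_n,x^\ast)\longrightarrow 0 .
\]
Combining this with $x_{n+1}\to x^\ast$ through the triangle inequality gives $d(x^\ast,Tx^\ast)\leq d(x^\ast,x_{n+1})+d(x_{n+1},Tx^\ast)\to 0$, so $d(x^\ast,Tx^\ast)=0$. Because $Tx^\ast\in\mathcal{CB}(X)$ is closed, this yields $x^\ast\in Tx^\ast$, the desired fixed point.

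The main obstacle is precisely this last step: the telescoping bound controls only the orbit $\{x_n\}$ and its limit, and nothing in the Caristi-type inequality by itself connects that limit to the value $Tx^\ast$. The non-expansiveness hypothesis is exactly what bridges the gap, letting the distance from $x_{n+1}$ to the target set $Tx^\ast$ be dominated by $d(x_n,x^\ast)$, after which closedness of $Tx^\ast$ converts $d(x^\ast,Tx^\ast)=0$ into genuine membership. (Lower semi-continuity of $\varphi$ is the ingredient one would need if, instead of arguing directly, one preferred to route the construction through Theorem~\ref{t1} applied to a single-valued selection; in the orbit argument above it is not strictly required.)
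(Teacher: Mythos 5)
Your proposal is correct and takes essentially the same route as the paper's own proof: the same inductive selection of an orbit $x_{n+1}\in Tx_n$ using the Caristi-type inequality, the same telescoping of $\varphi(x_n,x_{n+1})$ to make $\{x_n\}$ Cauchy, and the same final step combining the triangle inequality, non-expansiveness of $T$, and closedness of $Tx^\ast$ to conclude $x^\ast\in Tx^\ast$. Your side observations are also accurate---the paper likewise never uses the lower semi-continuity of $\varphi$ in this argument, and your explicit appeal to closedness of $Tx^\ast$ fills in a step the paper leaves implicit.
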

\begin{proof}
Let $x_0\in X$ and let $x_{1}\in Tx_0$. If $x_0=x_1$ then $x_0$ is a fixed point and finished. Otherwise, let $x_1\neq x_0$. By assumption there exists
$x_2\in Tx_1$ such that
\[
d(x_0,x_1)\leq \varphi(x_0,x_1)-\varphi(x_1,x_2).
\]
Alternatively, one can choose $x_{n}\in Tx_{n-1}$ such that $x_n\neq x_{n-1}$ and find $x_{n+1}\in Tx_n$ such that
\begin{equation}\label{g32}
0<d(x_{n-1},x_n)\leq \varphi(x_{n-1},x_n)-\varphi(x_n,x_{n+1})
\end{equation}
which means that, $\{\varphi(x_{n-1},x_n)\}_n$ is a non-increasing and bounded below sequence so it is converges to some $r\geq 0$. By taking limit on both side of (\ref{g32}) we have $\lim\limits_{n\to\infty}d(x_{n-1},x_n)=0$. Also, for all $m,n\in\NN$ with $m>n$,
\begin{equation}\label{eh3}
\begin{array}{lll}
d(x_n,x_m)&\leq &\overset{m}{\underset{i=n+1}{\sum}}d(x_{i-1},x_{i})\\\\
&\leq &\overset{m}{\underset{i=n+1}{\sum}}\varphi(x_{i-1},x_i)-\varphi(x_i,x_{i+1})\\\\
&\leq &\varphi(x_{n},x_{n+1})-\varphi(x_m,x_{m+1}).
\end{array}
\end{equation}
Therefore, by taking limsup on both side of (\ref{eh3}) we have
\[
\lim_{n\to\infty}(\sup\{d(x_n,x_m):m>n\})=0.
\]
It means that, $\{x_n\}$ is a Cauchy sequence and so it is converges to $u\in X$. Now we show that $u$ is a fixed point of $T$.
\begin{equation}\label{e2k2}
\begin{array}{lll}
d(u,Tu)&\leq &d(u,x_{n+1})+d(x_{n+1},Tu)\\\\
&= &d(u,x_{n+1})+\hh(Tx_{n},Tu)\\\\
&\leq & d(u,x_{n+1})+d(x_{n},u).
\end{array}
\end{equation}
By taking limit on both side of (\ref{e2k2}), we get $d(x,Tx)=0$ and this means that $x\in Tx$.
\end{proof}
The following theorem is a partial answer to Problem \ref{pr1}
\begin{theorem}\label{tt1} Let $(X,d)$ be a complete metric space, and let $T:X\to \mathcal{CB}(X)$ be a multi-valued function such that
\[
\hh(Tx,Ty)\leq \eta (d(x,y))
\]
for all $x,y \in X$, where $\eta :[0,\infty)\rightarrow [0,\infty)$ is lower semi continuous map such that $\eta(t)<t$, for all $t\in (0,+\infty)$ and $\frac {\eta(t)}{t}$ is non-decreasing.
Then $T$ has a fixed point.
\end{theorem}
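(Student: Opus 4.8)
The plan is to derive the statement from Theorem \ref{ttt4}, using exactly the auxiliary function from the single‑valued corollary proved above. Concretely, I would set
\[
\varphi(x,y)=\frac{d(x,y)}{1-\frac{\eta(d(x,y))}{d(x,y)}}\quad(x\neq y),\qquad \varphi(x,x)=0,
\]
so that $\varphi(x,y)=\dfrac{d(x,y)^2}{d(x,y)-\eta(d(x,y))}$ off the diagonal. Two preliminary facts must be recorded. First, $T$ is non-expansive: for $x\neq y$ one has $\hh(Tx,Ty)\leq\eta(d(x,y))<d(x,y)$, while the diagonal case is trivial, so $\hh(Tx,Ty)\leq d(x,y)$ everywhere, which is the standing hypothesis of Theorem \ref{ttt4}. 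Second, $\varphi$ is lower semicontinuous in its first variable: writing $\varphi(x,y)=h(d(x,y))$ with $h(s)=s^2/(s-\eta(s))$, the denominator $s-\eta(s)$ is upper semicontinuous and strictly positive for $s>0$ (since $\eta$ is lsc and $\eta(s)<s$), hence $1/(s-\eta(s))$ is lsc, $h$ is lsc, and composing with the continuous map $x\mapsto d(x,y)$ preserves this.

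The heart of the matter is producing, for each $x$ and each $y\in Tx$, a witness $z\in Ty$ for the inequality (\ref{fdre}). Because $y\in Tx$, the definition of the Hausdorff metric gives
\[
d(y,Ty)\leq \sup_{a\in Tx}d(a,Ty)\leq\hh(Tx,Ty)\leq\eta(d(x,y)).
\]
I would then select $z\in Ty$ with $d(y,z)\leq\eta(d(x,y))$. Granting such a $z$, the check of (\ref{fdre}) is verbatim the computation of the preceding corollary: with $s=d(x,y)$ and $t=d(y,z)\leq\eta(s)<s$, the bound $t\leq\eta(s)$ yields $(1-\eta(s)/s)s\leq s-t$, hence $s\leq\varphi(x,y)-\dfrac{t}{1-\eta(s)/s}$; and since $\eta(\cdot)/\cdot$ is non-decreasing and $t\leq s$ we get $\eta(t)/t\leq\eta(s)/s$, so $\dfrac{t}{1-\eta(s)/s}\geq\dfrac{t}{1-\eta(t)/t}=\varphi(y,z)$, giving $d(x,y)\leq\varphi(x,y)-\varphi(y,z)$. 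Theorem \ref{ttt4} then delivers the fixed point.

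The step that is genuinely more than bookkeeping, and the main obstacle, is the selection of $z$. We only know $d(y,Ty)\leq\eta(d(x,y))$ and that $Ty$ is closed but not compact, so the infimum $d(y,Ty)$ need not be attained; if it equals $\eta(d(x,y))$, there may be no $z\in Ty$ with $d(y,z)\leq\eta(d(x,y))$, only points with $d(y,z)\leq\eta(d(x,y))+\epsilon$. The saving feature is the strict gap $\eta(s)<s$ together with the monotonicity of $\eta(\cdot)/\cdot$: one verifies that the telescoping inequality actually holds for all $t$ up to a threshold $\geq\eta(s)$, with strict slack unless $\eta(\eta(s))/\eta(s)=\eta(s)/s$, so a near-minimizing choice $d(y,z)\leq\eta(d(x,y))+\epsilon$ still closes the argument outside the borderline constant-ratio regime (for instance linear $\eta$, i.e.\ Nadler's case). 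In that borderline situation I would pass to a slightly enlarged bound, replacing $\eta(s)$ by some $\eta'(s)$ with $\eta(s)<\eta'(s)<s$ and $\eta'(\cdot)/\cdot$ still non-decreasing, so that $d(y,Ty)<\eta'(d(x,y))$ strictly and a genuine selection $z\in Ty$ with $d(y,z)\leq\eta'(d(x,y))$ exists; rerunning the estimate with $\varphi$ built from $\eta'$ then finishes. This non-compact selection is exactly where the multi-valued case departs from the single-valued corollary, and it is the point that must be handled with care precisely because compactness of the values $Tx$ is deliberately not assumed.
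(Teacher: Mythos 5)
Your proposal follows the same route as the paper: reduce to Theorem \ref{ttt4} via the potential $\varphi(x,y)=\frac{d(x,y)}{1-\eta(d(x,y))/d(x,y)}$, and you correctly isolate the genuine difficulty, namely that $Ty$ is only closed, so the infimum $d(y,Ty)\leq \hh(Tx,Ty)\leq\eta(d(x,y))$ need not be attained. Your fallback fix --- replace $\eta$ by a strictly larger $\eta'$ with $\eta(t)<\eta'(t)<t$ and $\eta'(t)/t$ still non-decreasing --- is precisely the paper's device: it takes $\theta(t)=\frac{\eta(t)+t}{2}$, so that $\hh(Tx,Ty)\leq\eta(d(x,y))<\theta(d(x,y))$ leaves genuine room to select $z\in Ty$ with $d(y,z)<\theta(d(x,y))$, and then it runs exactly your telescoping computation with $\Phi$ built from $\theta$ instead of $\eta$.

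The difference is that the paper applies this enlargement uniformly to \emph{all} pairs, not only in a ``borderline'' regime, and this is not cosmetic. Theorem \ref{ttt4} requires one fixed function $\varphi$, lower semicontinuous in the first variable, that serves every pair $x$, $y\in Tx$ along the whole iteration; you cannot certify some pairs with the potential built from $\eta$ and other pairs with the potential built from $\eta'$, so your two-track structure does not literally fit the hypothesis of Theorem \ref{ttt4}. Moreover, your criterion for when near-minimizers suffice is imprecise: writing $\rho(s)=\eta(s)/s$, what the slack argument needs is $\lim_{t\downarrow\eta(s)}\rho(t)<\rho(s)$ (a gap for the \emph{right-hand limit}), not merely $\rho(\eta(s))<\rho(s)$; a non-decreasing $\rho$ may jump immediately to the right of $\eta(s)$ and destroy the slack even though $\rho(\eta(s))<\rho(s)$. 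Both defects disappear if you discard the case analysis and run your own fallback globally: $\theta=(\eta+\mathrm{id})/2$ inherits lower semicontinuity, $\theta(t)<t$, and monotonicity of $\theta(t)/t$, the selection of $z$ works for every pair, and the telescoping closes --- which is exactly the paper's proof.
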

\begin{proof}Let $x\in X$ and $y\in Tx$. If $y=x$  then $T$ has a fixed point and the proof is complete, so we suppose that $y\neq x$. Define
\[\theta (t)=\frac{\eta(t)+t}{2}\hbox{   for all $t\in (0,+\infty)$}.\]
Since $\hh(Tx,Ty)\leq \eta (d(x,y))<\theta (d(x,y))<d(x,y)$. Thus there exists $\epsilon_0>0$ such that $\theta (d(x,y))=\hh(Tx,Ty)+\epsilon_0$. So there exists $z\in Ty$ such that
\begin{equation}\label{rd2}
d(y,z)<\hh(Tx,Ty)+\epsilon_0=\theta (d(x,y))<d(x,y).
\end{equation}
We again suppose that $y\neq z$, therefore $d(x,y)-\theta (d(x,y))\leq d(x,y)-d(y,z)$ or equivalently
\[d(x,y)<\frac{d(x,y)}{1-\frac{\theta (d(x,y))}{d(x,y)}}-\frac{d(y,z)}{1-\frac{\theta (d(x,y))}{d(x,y)}}\]
since $\frac{\theta(t)}{t}$ is also a nondecreasing function and $d(y,z)<d(x,y)$  we get
\[d(x,y)<\frac{d(x,y)}{1-\frac{\theta (d(x,y))}{d(x,y)}}-\frac{d(y,z)}{1-\frac{\theta (d(y,z))}{d(y,z)}}.\]
Define $\Phi(x,y)=\frac{d(x,y)}{1-\frac{\theta (d(x,y))}{d(x,y)}}$ if $x\neq y$, otherwise $0$ for all $x,y\in X$. It means that,
\[
d(x,y)<\Phi(x,y)-\Phi(y,z).
\]
Therefore, $T$ satisfies in (\ref{fdre}) of Theorem \ref{ttt4} and so conclude that $T$ has a unique fixed point $u$ and the proof is completed.
\end{proof}


The following theorem is a partial answer to Problem \ref{pr2}:
\begin{corollary}\cite[Mizoguchi-Takahashi's type]{Miz}\label{co22}
Let $(X,d)$ be a complete metric space and let $T:X\to \mathcal{CB}(X)$ be a multi-valued mapping such that
\begin{equation}\label{e7}
\hh(Tx,Ty)\leq \eta(d(x,y))d(x,y)
\end{equation}
for all $x,y\in X$, where $\eta:[0,+\infty)\to[0,1)$ be a lower semi continuous and non-decreasing mapping. Then $T$ has a fixed point.
\end{corollary}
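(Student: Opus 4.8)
The plan is to derive Corollary~\ref{co22} directly from Theorem~\ref{tt1} by absorbing the extra linear factor $d(x,y)$ on the right-hand side of (\ref{e7}) into the comparison function. Concretely, I would set
\[
\tilde\eta(t) = t\,\eta(t), \qquad t\in[0,+\infty),
\]
so that hypothesis (\ref{e7}) reads exactly as $\hh(Tx,Ty)\leq \tilde\eta(d(x,y))$ for all $x,y\in X$, which is precisely the contractive inequality of Theorem~\ref{tt1}. It then remains only to check that $\tilde\eta$ inherits the three structural requirements that Theorem~\ref{tt1} imposes on its comparison function.

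Two of these transfer with no real work. Since $\eta$ takes values in $[0,1)$, for every $t>0$ we have $\tilde\eta(t)=t\,\eta(t)<t$, which gives the strict domination $\tilde\eta(t)<t$. Moreover $\tilde\eta(t)/t=\eta(t)$ for $t>0$, so the assumption that $\eta$ is non-decreasing is exactly the requirement that $t\mapsto \tilde\eta(t)/t$ be non-decreasing. Thus both the strict-domination condition and the monotone-ratio condition hold automatically.

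The one point that genuinely needs an argument is the lower semicontinuity of $\tilde\eta$, and this is the step I expect to be the main (if modest) obstacle, because a product of lower semicontinuous functions need not be lower semicontinuous in general. Here, however, both factors are non-negative: the map $t\mapsto t$ is continuous, hence lower semicontinuous and non-negative on $[0,+\infty)$, while $\eta\geq 0$ is lower semicontinuous by hypothesis. For non-negative quantities one has
\[
\liminf_{s\to t}\big(s\,\eta(s)\big)\ \geq\ \big(\lim_{s\to t}s\big)\big(\liminf_{s\to t}\eta(s)\big)\ \geq\ t\,\eta(t)=\tilde\eta(t),
\]
so $\tilde\eta$ is lower semicontinuous on $[0,+\infty)$.

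With all the hypotheses of Theorem~\ref{tt1} verified for $\tilde\eta$ --- and noting that $\hh(Tx,Ty)\leq \tilde\eta(d(x,y))\leq d(x,y)$ already forces $T$ to be non-expansive, so no separate contractivity check is required --- that theorem applies and yields a fixed point of $T$, which completes the proof.
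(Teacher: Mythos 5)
Your proposal is correct and is essentially the paper's own argument: the paper likewise defines $\theta(t)=\eta(t)\,t$, observes that $\theta(t)<t$ for $t>0$ and that $\theta(t)/t=\eta(t)$ is non-decreasing, and invokes Theorem~\ref{tt1}. The only difference is that you additionally verify lower semicontinuity of the product (via the valid $\liminf$ inequality for non-negative factors) and the non-expansiveness of $T$, two hypotheses of Theorem~\ref{tt1} that the paper's proof passes over in silence, so your write-up is if anything slightly more complete than the original.
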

\begin{proof}
Let $\theta (t)=\eta(t)t$ . $\theta(t)<t$ for all $t\in R_{+}$  and $\frac{\theta(t)}{t}=\eta (t)$  is a nondecreasing mapping. By the assumption $ d(Tx,Ty)\leq \eta (d(x,y))d(x,y)=\theta(d(x,y))$ for all $x,y\in X$, therefore by Theorem \ref{tt1} $T$ has a fixed point.
\end{proof}
Note that if $\eta :[0,\infty)\rightarrow [0,1)$ be an non-decreasing map then for all $s\in[0,+\infty)$
\[
\begin{array}{lll}
\limsup\limits_{t\to s^+}\eta(t)&=&\inf\limits_{\delta>0}\sup\limits_{s\leq t<s+\delta}\eta(t)\\
&=&\lim\limits_{\delta\to 0}\sup\limits_{s\leq t<s+\delta}\eta(t)\leq\eta(s+\delta)<1.
\end{array}
\]
It means that, Corollary \ref{co22} yields from the Mizoguchi-Takahashi's results directly and here we deduce it from our results\cite{Miz}.

The following theorem is a partial answer to Problem \ref{pr3}:
\begin{corollary}\label{tt2} Let $(X,d)$ be a complete metric space, and let $T:X\to \mathcal{CB}(X)$ be a multi-valued function such that
\[
\hh(Tx,Ty)\leq d(x,y)-\theta(d(x,y))
\]
for all $x,y \in X$, where $\theta :(0,\infty)\rightarrow (0,\infty)$ is upper semi continuous map such that, for all $t\in (0,+\infty)$ and $\frac {\theta(t)}{t}$ is non-increasing.
Then $T$ has a fixed point.
\end{corollary}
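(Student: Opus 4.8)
The plan is to reduce this corollary to Theorem~\ref{tt1} by the substitution $\eta = \mathrm{id} - \theta$, in exactly the same spirit that Corollary~\ref{co22} was obtained by the substitution $\theta(t)=\eta(t)t$. Concretely, I would define $\eta:[0,+\infty)\to[0,+\infty)$ by $\eta(0)=0$ and $\eta(t)=t-\theta(t)$ for $t>0$, so that the weak contraction hypothesis $\hh(Tx,Ty)\le d(x,y)-\theta(d(x,y))$ reads precisely as $\hh(Tx,Ty)\le\eta(d(x,y))$ for all $x,y\in X$. If the three structural hypotheses of Theorem~\ref{tt1} can be verified for this $\eta$, the conclusion is immediate.

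I would then check those three hypotheses in turn. First, $\eta(t)<t$ for every $t>0$ is immediate, since $\theta(t)>0$ on $(0,+\infty)$ by assumption. Second, the ratio condition: writing $\eta(t)/t = 1-\theta(t)/t$, the hypothesis that $t\mapsto\theta(t)/t$ is non-increasing makes $t\mapsto -\theta(t)/t$ non-decreasing, hence $t\mapsto\eta(t)/t$ is non-decreasing, as Theorem~\ref{tt1} demands. Third, lower semicontinuity of $\eta$ follows from upper semicontinuity of $\theta$: since $-\theta$ is lower semicontinuous and the identity map is continuous, their sum $\eta$ is lower semicontinuous. With these three points in hand, Theorem~\ref{tt1} applies verbatim and produces the fixed point.

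The point that needs care — and which I expect to be the main obstacle — is that Theorem~\ref{tt1} requires $\eta$ to take values in $[0,+\infty)$, that is $\theta(t)\le t$, whereas the stated hypotheses only guarantee $\theta(t)>0$ together with $\theta(t)/t$ non-increasing, and these alone do not force $\theta(t)\le t$ (e.g. $\theta(t)=2t$ has constant, hence non-increasing, ratio). I would resolve this by observing that the range $[0,+\infty)$ of $\hh$ already forces $\theta(d(x,y))\le d(x,y)$ at every distance actually realized by a pair of points, so on all relevant values $\eta$ is genuinely non-negative; alternatively, and more cleanly for the global application of Theorem~\ref{tt1}, I would replace $\eta$ by $t\mapsto\max\{t-\theta(t),0\}$. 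This replacement does not change the inequality at any realized distance, and it preserves both the ratio monotonicity (the maximum of a non-decreasing function with the constant $0$ is non-decreasing) and the lower semicontinuity (the maximum of two lower semicontinuous functions is lower semicontinuous). After this adjustment the reduction to Theorem~\ref{tt1} is clean, and the existence of a fixed point follows.
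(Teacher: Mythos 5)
Your proof follows essentially the same route as the paper's: the paper also sets $\eta(t)=t-\theta(t)$, observes that $\eta(t)<t$ and that $\eta(t)/t=1-\theta(t)/t$ is non-decreasing, and invokes Theorem~\ref{tt1}. The extra care you take --- verifying lower semicontinuity of $\eta$ and truncating to $t\mapsto\max\{t-\theta(t),0\}$ so that $\eta$ genuinely maps into $[0,+\infty)$ as Theorem~\ref{tt1} requires --- patches gaps that the paper's two-line proof silently ignores, so your version is more complete than the original.
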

\begin{proof}
Let $\eta(t)=t-\theta(t)$, for each $t>0$. Then, $\eta(t)<t$, for each $t>0$ and $\frac{\eta(t)}{t}=1-\frac{\theta(t)}{t}$ is non-decreasing. Thus, desired result is obtained by Theorem \ref{tt1}.
\end{proof}

\section{Existence of bounded solutions of functional equations}
Mathematical optimization is one of the fields in which the methods of fixed point theory are widely
used. It is well known that the dynamic programming provides useful tools for mathematical optimization
and computer programming. In this setting, the problem of dynamic programming related to multistage
process reduces to solving the functional equation
\begin{equation}\label{sw9}
p(x)=\sup_{y\in \T}\{f(x,y)+\Im(x,y,p(\eta(x,y)))\}, \ \ \ x\in \Z,
\end{equation}
where $\eta:\Z\times \T\to \Z$, $f:\Z\times \T\to \RR$ and $\Im:\Z\times \T\times \RR\to \RR$.
We assume that $\M$ and $\N$ are Banach spaces,
$\Z\subset \M$ is a state space and $\T\subset \N$ is a decision space. The studied process consists
of \textit{a state space}, which is the set of the initial state, actions and transition model of the process and \textit{a decision space}, which is the set of possible actions that are allowed for the process.

Here, we study the existence of the bounded solution of the functional equation \ref{sw9}.
Let $\B(\Z)$ denote the set of all bounded real-valued functions on $W$ and, for an arbitrary $h\in \B(\Z)$, define $||h|| = \sup_{x\in \Z} |h(x)|$. Clearly, $(\B(W),||.||)$ endowed with the metric d defined by
\begin{equation}\label{p2}
d(h,k)=\sup_{x\in\Z}|h(x)-k(x)|,
\end{equation}
for all $h,k\in \B(\Z)$, is a Banach space. Indeed, the convergence in the space $\B(\Z)$ with respect to $||.||$ is uniform. Thus, if we consider a Cauchy sequence $\{h_n\}$ in $\B(\Z)$, then $\{h_n\}$ converges uniformly to a function, say $h^*$, that is bounded and so $h\in \B(\Z)$.\\
We also define $S:\B(\Z)\to \B(\Z)$ by
\begin{equation}\label{re2}
S(h)(x)=\sup_{y\in \T}\{f(x,y)+\Im(x,y,h(\eta(x,y)))\}
\end{equation}
for all $h\in \B(\Z)$ and $x\in \Z$.

We will prove the following theorem.
\begin{theorem}
Let $S : \B(\Z)\to \B(\Z)$ be an upper semi-continuous operator defined by (\ref{re2}) and assume that the
following conditions are satisfied:
\begin{itemize}
\item[$(i)$] $f:\Z\times \T\to \RR$ and $\Im:\Z\times \T\times \RR\to \RR$ are continuous and bounded;
\item[$(ii)$] for all $h,k\in \B(\Z)$, if
\begin{equation}\label{pp2}
\begin{array}{rll}
0<d(h,k)<1 &  implies&  |\Im(x,y,h(x))-\Im(x,y,k(x))|\leq \frac{1}{2}d^2(h,k)\\
d(h,k)\geq 1 & implies & |\Im(x,y,h(x))-\Im(x,y,k(x))|\leq \frac{2}{3}d(h,k)
\end{array}
\end{equation}
where $x\in \Z$ and $y\in \T$.
Then the functional equation (\ref{sw9}) has a bounded solution.
\end{itemize}
\end{theorem}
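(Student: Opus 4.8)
The plan is to recognize that a bounded solution of (\ref{sw9}) is exactly a fixed point of the operator $S$ defined in (\ref{re2}), so the theorem reduces to producing a fixed point of $S$ in the complete metric space $(\B(\Z),d)$. I would therefore aim to verify, for $T=S$, the hypotheses of the earlier corollary that deduces a unique fixed point from an estimate of the form $d(Tx,Ty)\le\eta(d(x,y))$, after first establishing such a contraction-type inequality for $S$ from the data in $(i)$ and $(ii)$.

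First I would estimate $d(Sh,Sk)$ pointwise in $x$. By the boundedness in $(i)$ the suprema defining $S(h)(x)$ and $S(k)(x)$ are finite, so the elementary inequality $|\sup_y a(y)-\sup_y b(y)|\le\sup_y|a(y)-b(y)|$ applies; since the term $f(x,y)$ is common to both it cancels, leaving
\[
|S(h)(x)-S(k)(x)|\le\sup_{y\in\T}\bigl|\Im(x,y,h(\eta(x,y)))-\Im(x,y,k(\eta(x,y)))\bigr|.
\]
At this point I would invoke condition $(ii)$: because $|h(z)-k(z)|\le d(h,k)$ for every $z\in\Z$, in particular for $z=\eta(x,y)$, the prescribed moduli control the right-hand side, giving $|S(h)(x)-S(k)(x)|\le\tfrac12 d^2(h,k)$ when $0<d(h,k)<1$ and $\le\tfrac23 d(h,k)$ when $d(h,k)\ge1$. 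Taking the supremum over $x\in\Z$ transfers these bounds to $d(Sh,Sk)$.

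The key simplifying observation I would then make is that the two regimes collapse into a single linear estimate: on $(0,1)$ one has $\tfrac12 d^2\le\tfrac12 d\le\tfrac23 d$, so in fact $d(Sh,Sk)\le\tfrac23 d(h,k)$ for all $h,k\in\B(\Z)$. Thus $S$ is a contraction with constant $\tfrac23$, and the fixed point furnished by the Banach-type corollary is the desired bounded solution of (\ref{sw9}). Equivalently one may take $\eta(t)=\tfrac23 t$, which is continuous (hence lower semicontinuous), satisfies $\eta(t)<t$ for $t>0$, and has $\eta(t)/t\equiv\tfrac23$ nondecreasing, so the $\eta$-contraction corollary applies verbatim.

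I expect the only genuine subtlety to lie at the threshold $d(h,k)=1$. A naive piecewise comparison function, namely $\eta(t)=\tfrac12 t^2$ on $(0,1)$ and $\eta(t)=\tfrac23 t$ on $[1,\infty)$, does satisfy $\eta(t)<t$ and has nondecreasing ratio $\eta(t)/t$, but it fails lower semicontinuity at $t=1$, where the value $\tfrac23$ exceeds the left limit $\tfrac12$; lowering that value to restore semicontinuity would violate $d(Sh,Sk)\le\eta(d(h,k))$ precisely at $d(h,k)=1$. The clean way around this is the one above: dominate the quadratic regime by the linear one and work with the single constant $\tfrac23$ throughout, which avoids the semicontinuity and monotonicity bookkeeping entirely. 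The remaining points, namely that $S$ maps $\B(\Z)$ into itself and that the suprema are finite, follow directly from the continuity and boundedness in $(i)$.
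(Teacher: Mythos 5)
Your proof is correct, and up to the contraction estimate it follows the same path as the paper: both arguments reduce the theorem to producing a fixed point of $S$ on the complete space $(\B(\Z),d)$, and both extract from condition (ii) the bound $d(Sh,Sk)\le\varrho(d(h,k))$, where $\varrho(t)=\tfrac12 t^2$ on $(0,1)$ and $\varrho(t)=\tfrac23 t$ on $[1,\infty)$ (the paper derives this by choosing $\mu$-near-optimal $y_1,y_2$ in the suprema, which is just an unwound form of your inequality $|\sup_y a-\sup_y b|\le\sup_y|a-b|$). The divergence is at the final step, and it matters. The paper feeds the piecewise $\varrho$ directly into Theorem \ref{tt1}, checking that $\varrho(t)<t$ and that $\varrho(t)/t$ is non-decreasing --- both true --- but Theorem \ref{tt1} also requires the comparison function to be lower semi-continuous, and $\varrho$ fails this at $t=1$: $\varrho(1)=\tfrac23$ while $\lim_{t\to 1^-}\varrho(t)=\tfrac12$. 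This is exactly the threshold defect you identified, so the paper's own invocation of Theorem \ref{tt1} is, as written, not justified. Your repair --- dominating $\tfrac12 t^2\le\tfrac23 t$ on $(0,1)$ so that $d(Sh,Sk)\le\tfrac23\, d(h,k)$ for all $h,k$, then applying the Banach contraction corollary (equivalently Theorem \ref{tt1} with $\eta(t)=\tfrac23 t$, which is continuous with constant ratio) --- is both more elementary and actually rigorous; the only thing it gives up is the sharper quadratic rate near $0$, which is irrelevant to the conclusion. In short, your proposal does not merely reprove the theorem by a different route: it closes a genuine gap in the paper's argument.
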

\begin{proof}
Note that $(\B(\Z),d)$ is a complete metric space, where $d$ is the metric given by (\ref{p2}). Let $\mu$ be an
arbitrary positive number, $x\in \Z$ and $h_1, h_2 \in \B(\Z)$, then there exist $y_1, y_2 \in \T$ such that
\begin{eqnarray}
S(h_1)(x)&<&f(x,y_1)+\Im(x,y_1,h_1(\eta(x,y_1)))+\mu,\\
S(h_2)(x)&<&f(x,y_2)+\Im(x,y_2,h_2(\eta(x,y_2)))+\mu,\\
S(h_1)(x)&\geq& f(x,y_1)+\Im(x,y_1,h_1(\eta(x,y_1))),\\
S(h_2)(x)&\geq&f(x,y_2)+\Im(x,y_2,h_2(\eta(x,y_2))).
\end{eqnarray}
Let $\varrho :[0,\infty)\rightarrow [0,\infty)$ be defined by
\[
\varrho(t)=\left\{
\begin{array}{ll}
\frac {1}{2}t^2, & \hbox{$0<t<1$} \\
\frac{2}{3}t, & \hbox{$t\ge 1$}.
\end{array}
\right.
\]
Then we can say that (\ref{pp2}) is equivalent to
\begin{equation}\label{oo3}
|\Im(x,y,h(x))-\Im(x,y,k(x))|\leq\varrho(d(h,k))
\end{equation}
for all $h,k\in \B(\Z)$. It is easy to see that $\varrho(t)<t$, for all $t>0$ and $\frac{\varrho(t)}{t}$ is a non-decreasing function.

Therefore, by using (24), (27) and (28), it follows that
\[
\begin{array}{lll}
S(h_1)(x)-S(h_2)(x)&<& \Im(x,y_1,h_1(\eta(x,y_1)))-\Im(x,y_2,h_2(\eta(x,y_2)))+\mu\\
&\leq&|\Im(x,y_1,h_1(\eta(x,y_1)))-\Im(x,y_2,h_2(\eta(x,y_2)))|+\mu\\
&\leq&\varrho(d(h_1,h_2))+\mu.
\end{array}
\]
Then we get
\begin{equation}\label{gh3}
S(h_1)(x)-S(h_2)(x)<\varrho(d(h_1,h_2))+\mu.
\end{equation}
Analogously, by using (25) and (26), we have
\begin{equation}\label{gh4}
S(h_2)(x)-S(h_1)(x)<\varrho(d(h_1,h_2))+\mu.
\end{equation}
Hence, from (\ref{gh3}) and (\ref{gh4}) we obtain
\[
|S(h_2)(x)-S(h_1)(x)|<\varrho(d(h_1,h_2))+\mu,
\]
that is,
\[
d(S(h_1),S(h_2))<\varrho(d(h_1,h_2))+\mu.
\]
Since the above inequality does not depend on $x\in\Z$ and $\mu > 0$ is taken arbitrary, then we conclude
immediately that
\[
d(S(h_1),S(h_2))\leq\varrho(d(h_1,h_2)),
\]
so we deduce that the operator $S$ is an $\varrho$-contraction. Thus, due to the continuity of $S$, Theorem \ref{tt1} applies to the operator $S$, which has a fixed point $h^*\in \B(\Z)$, that is, $h^*$ is a bounded solution of the functional equation
(\ref{sw9}).
\end{proof}



\begin{thebibliography}{99}
\bibitem{E1974} I. Ekeland, On the variational principle, \emph{J. Math. Anal. Appl.}, 47 (2)(1974) 324--353.
\bibitem{E1979} I. Ekeland, Nonconvex minimization problems, \emph{Bull. Amer. Math. Soc.}, 1 (1979), 443--474.
\bibitem{W1977} J. D. Weston,  A characterization of metric completeness, \emph{Proc. Amer. Math. Soc.} 64 (1) (1977), 186–188.
\bibitem{1} R. P. Agarwal, M. A. Khamsi, Extension of Caristi's fixed point theorem to vector valued metric space,\emph{Nonlinear Anal. (TMA) }, 74 (2011),
141--145(Doi:10.1016/j.na.2010.08.025).
\bibitem{2} M. A. Khamsi, Remarks on Caristi's fixed point theorem, \emph{Nonlinear Anal.(TMA)} 71(2009), 227--231.
\bibitem{CAR} Caristi, J: Fixed point theorems for mappings satisfiying inwardness conditions, {\em Trans. Amer. Math. Soc.} {\bf 215} 241--251 (1976).
\bibitem{Boyd}
D.W. Boyd,J.S.W. Wong, On nonlinear contractions.\emph{ Proc. Amer. Math. Soc.}, 20 (1969), 458--464.
\bibitem{reich1}
S. Reich, Fixed points of contractive functions, {\em Boll. Un. Mat. Ital.} (4) 5
(1972), 26-42.
\bibitem{daffer}
P. Z. Daffer, H. Kaneko, W. Li, On A Conjecture Of S. Reich, Proc. Amer. Math. Soc. 124 (1996) 3159--3162.
\bibitem{jachymski}
J. R. Jachymski,
Caristi's fixed point theorem and selections of set-valued contractions, {\em J. Math. Anal. Appl.} 227 (1998) 55--67.
\bibitem{reich2}
S. Reich, Some problem and results in fixed point theory, {\em Contemp. Math.} 21 (1983) 179--187.
(1972), 26-42.
\bibitem{Miz} N. Mizoguchi, W. Takahashi, Fixed point theorems for multivalued mappings on complete metric
spaces,\emph{ J. Math. Anal. Appl.} , 141 (1989), 177-188.
\bibitem{Amini}
A. Amini-Harandi, Some generalizations of Caristi's fixed point theorem with applications
to the fixed point theory of weakly contractive set-valued maps and the
minimization problem, {\em Nonlinear Anal.(TMA)}, 72 (2010), 4661-4665.
\bibitem{BAN}
Banach, S:
Sur les op\'erations dans les ensembles abstraits et leur application aux \'equations
int\'egrales,
{\em Fundamenta Mathematicae} {\bf 3} 133--181 (1922).
\bibitem{Tok}
Ahmad, J, Hussain, N, Rahim Khan, A, Azam, A, Fixed
point results for generalized multi-valued contractions, J. Nonlinear Sci. Appl.
8 (2015), 909-918.
\end{thebibliography}
\end{document}